\documentclass[reqno,12pt]{article}

\usepackage{a4wide}
\usepackage{amsmath,stmaryrd} 
\usepackage{amssymb}
\usepackage{amsthm}
\usepackage[utf8]{inputenc} 
\usepackage{graphicx} 
\usepackage{xcolor}
\usepackage[normalem]{ulem}

\numberwithin{equation}{section}

\newtheorem{theo}{Theorem}

\newtheorem{coro}{Corollary}
\newtheorem{prop}{Proposition}

\newtheorem{lem}{Lemma}

\newtheorem{Question}{Question}

\theoremstyle{remark}

\newtheorem*{Remark*}{Remark}
\newtheorem*{Remarks*}{Remarks}

\makeatletter
\newcommand*{\house}[1]{%
 \mathord{%
 \mathpalette\@house{#1}%
 }%
}
\newcommand*{\@house}[2]{%
 \dimen@=\fontdimen8 %
 \ifx#1\scriptscriptstyle\scriptscriptfont
 \else\ifx#1\scriptstyle\scriptfont
 \else\textfont\fi\fi
 3 %
 \sbox0{%
 $#1%
 \vrule width\dimen@\relax
 \overline{%
 \kern2\dimen@
 \begingroup 
 #2%
 \endgroup
 \kern2\dimen@
 }%
 \vrule width\dimen@\relax
 \mathsurround=1.5\dimen@ 
 $%
 }%
 \ht0=\dimexpr\ht0-\dimen@\relax
 \dp0=\dimexpr\dp0+2\dimen@\relax
 \vbox{%
 \kern\dimen@ 
 \copy0 %
 }%
}

\newcommand{\tra}{{}^t}
\newcommand{\N}{\mathbb{N}}

\newcommand{\Q}{\mathbb{Q}}

\newcommand{\C}{\mathbb{C}}
\newcommand{\K}{\mathbb{K}}

\newcommand{\Qbar}{\overline{\mathbb Q}}

\newcommand{\etoile}{^*}

\newcommand{\calP}{\mathcal P}

\newcommand{\calD}{\mathcal D}

\newcommand{\EE}{{\bf E}}
\newcommand{\GG}{{\bf G}}

\newcommand{\Span}{{\rm Span}}

\begin{document}

\title{On the singularities of differential equations satisfied by $E$-functions}
\date\today
\author{S. Fischler and T. Rivoal}
\maketitle

\begin{abstract} 
Let $\xi$ be a value, at an algebraic point, of a Siegel $E$-function. As a special case of a very general interpolation result, we prove that there exists an $E$-function $f$ such that $f(1)=\xi$, and such that 1 is not a singularity of the minimal differential equation satisfied by $f$. We prove that the same property does not hold at the point $0$, when $\xi$ is the value at a non-zero algebraic number of the Bessel function. This answers an analogue of a question asked by Yves André for $G$-functions.
\end{abstract}

\section{Introduction}

 We first recall the definition of $E$-functions, due to Siegel~\cite{siegel}. We embed the field of algebraic numbers $\Qbar$ in $\mathbb C$. 
A power series $f(x)=\sum_{n=0}^{\infty} a_n x^n/n! \in \Qbar[[x]]$ is said to be an $E$-function~if 

$(i)$ $f(x)$ is solution of a non-zero linear differential equation with coefficients in $\Qbar(x)$.
\smallskip

$(ii)$ For all $\varepsilon>0$, there exists an integer $N(\varepsilon)$ such that for all $n\ge N(\varepsilon)$, all Galois conjugates of $a_n$ have modulus less than $n!^{\varepsilon}$. 

\smallskip

$(iii)$ There exists a sequence of positive integers $d_n$ such that $d_na_m$ are algebraic integers for all~$m\le n$ and such that for all $\varepsilon>0$, there exists an integer $N(\varepsilon)$ such that for all $n\ge N(\varepsilon)$, $d_n\le n!^{\varepsilon}$.

\medskip

 An $E$-function is either a polynomial or a transcendental function. 
In $(i)$, it does not matter if the differential equation is inhomogeneous or homogeneous, because from an inhomogeneous equation of order $\mu$ for $F$ we readily obtain a homogeneous one of order $\mu+1$. Unless otherwise stated, all the differential operators considered in this text  will be implicitly assumed to be non-zero and have   coprime  coefficients in $\Qbar[x]$. An {\em a priori} smaller class of $E$-functions, called {\em strict} $E$-functions, has also been considered in the literature: $n!^{\varepsilon}$ for all $\varepsilon>0$ is replaced with $c^n$ for some $c>1$ (see for instance \cite{andre}); 
 it is conjectured that both classes coincide. $E$-functions below will be understood in Siegel's sense. The statements of our results and their proofs hold {\em mutatis mutandis} for $E$-functions in the strict sense. 

\medskip

When $f(x) =\sum_{n=0}^{\infty} a_n x^n/n!  $ is an $E$-function in the strict sense, the function $\sum_{n=0}^{\infty} a_n x^n  $ is called a $G$-function. We are interested in the values taken at algebraic points by these functions. Let  $\EE := \{f(\alpha) : \, f \textup{ is an {\it E}-function and } \alpha\in\Qbar\}$, and define $\GG$ in the same way where $f$ is an analytic continuation of a $G$-function.  
These sets, which contain $\Qbar$, were defined and proved to be rings in \cite{gvalues,ateo} (when $E$-functions are considered in the strict sense;   we extend here the definition to $E$-functions in  Siegel's sense and it is still a ring).

\medskip

The ring $\GG$ of values of $G$-functions was studied more deeply in \cite{gvalues}. It is related to the ring $\calP$ of periods (in the sense of \cite{KZ}, say): conjecturally, $\GG = \calP[1/\pi]$ (see \cite[end of \S 2.2]{gvalues}). The following result was proved in \cite{gvalues}:

\begin{theo}\label{thG}
    For any $\xi\in\GG$ there exists a $G$-function $f(x) =\sum_{n=0}^{\infty} a_n x^n  $  with coefficients $a_n$ in $\Q(i)$ such that $f(1)=\xi$; moreover the radius of convergence of such a power series can be arbitrarily large. 
\end{theo}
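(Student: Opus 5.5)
By definition of $\GG$, we can write $\xi = g(\alpha)$, where $g$ is a $G$-function and $g(\alpha)$ denotes the value at $\alpha\in\Qbar$ of an analytic continuation of $g$ along some path $\gamma$ from $0$ to $\alpha$ avoiding the finitely many singularities of a minimal differential operator $L$ for $g$. The plan is to reduce to a rescaled function with good coefficients: find a $G$-function $f$ with $f(1)=\xi$, where $1$ lies well inside the disk of convergence of $f$, and then make the coefficients lie in $\Q(i)$.

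\emph{First step: removing the continuation.} We cut $\gamma$ into finitely many short steps $0=z_0,z_1,\dots,z_m=\alpha$ with $z_j\in\Q(i)$ (except possibly $z_m$). Each step should be shorter than the distance from $z_{j-1}$ to the singular set of $L$. The Cauchy–Kovalevskaya/Frobenius theory at the ordinary points $z_{j-1}$ says that $g^{(k)}(z_j)$ is a linear combination of the values $g^{(k')}(z_{j-1})$. The coefficients are values at $z_j-z_{j-1}$ of $G$-functions, namely the entries of a local solution matrix at an ordinary algebraic point. By André's theorem, $G$-functions are stable under such re-expansion at algebraic points. So $\xi$ becomes a polynomial, with coefficients in $\Qbar$, in values $h(\beta)$ of $G$-functions $h$ at points $\beta$ with $|\beta|$ less than the radius of convergence. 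We use that the product of $G$-functions is a $G$-function, and also the Hadamard-type trick $h(\beta)\cdot k(\beta') = H(1)$ with $H(x)=h(\beta x)k(\beta' x)$. Together these collapse the polynomial to a single $F(1)$, where $F(x)=\sum_I c_I\prod h_i(\beta_i x)$ is a $G$-function with radius of convergence $R>1$.

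\emph{Second step: enlarging the radius.} To get radius as large as we like, we replace $F(x)$ by a function that agrees at $1$ but is less singular. The natural device is the change of variable $x\mapsto \varphi(x)$, with $\varphi$ a polynomial in $\Q(i)[x]$ satisfying $\varphi(0)=0$ and $\varphi(1)=1$, chosen so that $\varphi$ maps a large disk $D(0,A)$ into a region where $F$ continues analytically. More precisely, this region should be a simply connected neighbourhood of the path not meeting the singularities. Alternatively, and more cleanly, we can do the step-by-step continuation with many small steps. Then every function $h_i(\beta_i x)$ has radius at least $|\beta_i|^{-1}\cdot(\text{dist})$, which is as large as wanted provided the steps are small relative to the distances to the singularities. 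With this second approach, no composition is needed.

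\emph{Third step: coefficients in $\Q(i)$.} The function $F$ has coefficients in a number field $K$. Let $\omega_1,\dots,\omega_d$ be a basis of $K$ over $\Q$, and write $F=\sum_j \omega_j F_j$ with $F_j\in\Q[[x]]$. Each $F_j$ is a $G$-function: it is obtained from Galois conjugates of $F$ by inverting a Vandermonde-type matrix, and conjugates of $G$-functions are $G$-functions. Then $\xi=\sum_j\omega_j F_j(1)$. The constants $\omega_j\in\Qbar$ still have to be absorbed. Each $\omega_j$ is itself the value at $1$ of a $G$-function with rational coefficients and arbitrary radius: for instance, write $\omega_j$ as a value of an algebraic function. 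Concretely, if $\omega_j$ is a root of $P$, the branch $y(x)$ with $P(x, y)$ deformed so that $y(0)\in\Q$ and $y(1)=\omega_j$ gives this after rescaling. Multiplying these series and summing keeps coefficients in $\Q\subset\Q(i)$. The factor $i$ (that is, $\Q(i)$ rather than $\Q$) is needed only because of complex-conjugation issues. Alternatively, we write $\xi=\mathrm{Re}+i\,\mathrm{Im}$ and handle each part separately.

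The main obstacle is the last absorption step: realizing an algebraic number as $f(1)$ for a $G$-function $f$ with rational coefficients and arbitrarily large radius. This must be done while keeping control of the singular locus, since the algebraic function's branch points must lie outside a big disk. I would try $\omega=\theta\bigl(1+ (\omega/\theta-1)\bigr)$, with $\theta\in\Q(i)$ close to $\omega$, and expand $(1+u)^{1/N}$-type binomial series. This produces a $G$-function whose singularity is at distance $\approx 1/|\omega/\theta-1|$, which can be made large by choosing $\theta$ close enough to $\omega$.
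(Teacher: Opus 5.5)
The paper does not reprove Theorem~\ref{thG}; it quotes it from \cite{gvalues}. So I judge your argument on its own terms, and the third step has a genuine gap. Write $F=\sum_j\omega_jF_j$ with $F_j\in\Q[[x]]$. Each $F_j$ is a $\Qbar$-linear combination of the conjugates $\sigma(F)$, so its radius of convergence is only bounded below by the \emph{smallest} radius among the $\sigma(F)$, and nothing in your construction controls these. Conjugation can shrink the radius. For example, $F(x)=\sum_n(2-2\sqrt2)^nx^n$ has radius $1/(2\sqrt2-2)>1$, whereas its conjugate $\sum_n(2+2\sqrt2)^nx^n$ has radius $1/(2+2\sqrt2)<1$. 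Both rational components of $F$ then diverge at $x=1$, and the identity $\xi=\sum_j\omega_jF_j(1)$ makes no sense. Your second step does not repair this, because it only enlarges the radius of $F$ itself. A factor $h(\beta x)$ of $F$ becomes $\sigma(h)(\sigma(\beta)x)$ in $\sigma(F)$. Here $\sigma(h)$ is a local solution of the conjugate operator $\sigma(L)$ at the conjugate point $\sigma(z_{j-1})$, which may lie very close to a singularity of $\sigma(L)$. Moreover $|\sigma(\beta)|$ need not be small: your last increment $\alpha-z_{m-1}$ has conjugates $\sigma(\alpha)-\sigma(z_{m-1})$, which in general are not small at all.

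The missing idea is to make all conjugates converge on a large disc at once.
\begin{enumerate}
\item Replace $g$ by $g(\alpha x)$, so that the endpoint becomes $1$.
\item Take all vertices $z_j$ in $\Q(i)$. Then each increment $\beta=z_j-z_{j-1}$ is fixed by every embedding $\sigma$, over $\Q(i)$, of the coefficient field $K\ni i$.
\item Let $\Sigma$ be the finite Galois-stable set of all conjugates of the singularities of $L$. Run the broken line in the complement of $\Sigma$; this is a small perturbation that does not change the homotopy class in $\C\setminus\mathrm{Sing}(L)$.
\item Choose $|z_j-z_{j-1}|$ small compared to $\mathrm{dist}(z_{j-1},\Sigma)$, and $|z_1|$ small compared to the radii of all $\sigma(g)$.
\end{enumerate}
Then each $\sigma(h)$ is a local solution of $\sigma(L)$ at the ordinary point $z_{j-1}$, so it has radius at least $\mathrm{dist}(z_{j-1},\Sigma)$. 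Hence all conjugates of each $h(\beta x)$, and so of $F$, converge on a disc as large as you wish. The same then holds for the components of $F$ along a basis of $K$ over $\Q(i)$.

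Your absorption of the algebraic constants also needs fixing. The binomial series $(1+u)^{1/N}$ only produces radicals, not a general algebraic $\omega$. Instead, let $P$ be the minimal polynomial of $\omega$ and $\theta\in\Q(i)$ close to $\omega$, and define $y\in\Q(i)[[x]]$ by $P(y(x))=(1-x)P(\theta)$ and $y(0)=\theta$. This algebraic series is a $G$-function, and $y(1)=\omega$ by the local inverse function theorem for $P$ near $\omega$. Its only finite singularities are $x=1-P(c)/P(\theta)$ with $P'(c)=0$; since $P(c)\neq0$, they tend to infinity as $\theta\to\omega$. Note also that for non-real $\omega$ this forces coefficients in $\Q(i)$ rather than $\Q$, contrary to what you wrote.
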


The following question was asked by Y. André (for the point $0$), and is  open: 

\begin{Question} \label{q1} Given $\xi\in\GG$, does  there always exist a $G$-function $f(x)$ such that $f(1)=\xi$ and 
$f$ is solution of a homogeneous differential equation of which the point $0$ (resp. the point $1$) is not a singularity?
\end{Question}

\medskip

In the present paper, we study analogous statements for $E$-functions.  The ring $\EE$ of values of $E$-functions is related to, and conjecturally contained in,  the ring of  exponential periods (see \cite{KZ,FJ}). Any $E$-function is entire, so the assertion about the radius of convergence in Theorem~\ref{thG} is meaningless for $E$-functions. Restricting to coefficients $a_n$ in $\Q(i)$, or in a given number field $\K$, yields a ring of values strictly contained in $\EE$ (see \cite[Theorem 4]{ateo} and \cite{FRLiouville}) so this part of Theorem~\ref{thG} is completely different for $E$-functions. On the other hand,  the analogue of Question \ref{q1} makes sense for $E$-functions and we shall answer it.

\medskip

To begin with, we give a negative answer to the analogue of Question \ref{q1} at the point $0$.

\begin{theo}\label{thz} Denote by $J_0(x) := \sum_{n=0}^{\infty} \frac{(-1)^n}{n!^2} ( x/2 )^{2n}$ the Bessel function. Let $\alpha\in\Qbar\etoile$, and $f$ be any $E$-function such that $f(1)=J_0(\alpha)$. Then $0$ is a non-apparent singularity of any non-zero (in)homogeneous differential equation satisfied by $f$. 
\end{theo}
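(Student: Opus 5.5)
The plan is to argue by contradiction. Suppose $f$ is an $E$-function with $f(1)=J_0(\alpha)$ and $0$ is either not a singularity or only an apparent singularity of some differential equation satisfied by $f$.

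\emph{Reduction to the minimal operator.} First I reduce to the minimal homogeneous operator $L$ of $f$. An inhomogeneous equation $My=g$ with $g\in\Qbar(x)$ becomes a homogeneous one by applying an operator that annihilates $g$; one has to check that this creates no new non-apparent singularity at $0$. Next, $L$ right-divides any operator annihilating $f$. If $0$ is at most an apparent singularity of that operator, then all its local solutions at $0$ are holomorphic. Hence all local solutions of $L$ at $0$, which are among them, are holomorphic as well, so $0$ is at most an apparent singularity of $L$. By André's theorem, an $E$-function satisfies an $E$-operator whose only finite non-apparent singularity is $0$. Therefore, for the minimal operator $L$, every finite singularity is apparent, and $0$ is now apparent too. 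Consequently all solutions of $L$ are entire.

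\emph{Structure of $f$.} I then invoke André's theorem on $E$-operators with no non-apparent finite singularity, from his work on arithmetic Gevrey series. It says that the solutions of such an operator are exponential polynomials $\sum_i P_i(x)e^{\beta_i x}$ with $P_i\in\Qbar[x]$ and $\beta_i\in\Qbar$. Applied to $L$, it gives $f(x)=\sum_i P_i(x)e^{\beta_i x}$, and evaluating at $x=1$ yields $J_0(\alpha)=\sum_i P_i(1)e^{\beta_i}$.

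\emph{Lifting the relation.} To reach a contradiction, I apply Beukers' refinement of the Siegel--Shidlovskii theorem. The relevant system has as solutions $J_0(\alpha x)$, its derivative, and the functions $e^{\beta_i x}$; it can be written as $Y'=AY$ with $A$ having poles only at $0$, so $1$ is a regular point. Beukers' theorem says that any algebraic relation over $\Qbar$ between the values at $1$ comes from a functional relation. Here it should give an identity $J_0(\alpha x)=\sum_i q_i(x)e^{\beta_i x}$ with $q_i\in\Qbar(x)$. Such an identity is impossible. On the one hand, $J_0(\alpha x)$ behaves like $x^{-1/2}\cos(\alpha x-\pi/4)$ asymptotically. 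On the other hand, any right-hand side of that form has only integral-power asymptotics times exponentials. Equivalently, $J_0$ does not satisfy a first-order-factorizable equation, since its formal solutions at $\infty$ involve $x^{-1/2}$ and so it is not an exponential polynomial.

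\emph{Main obstacle.} I expect the hardest step to be applying the two theorems in exactly the right form. For André's structure theorem, I must make sure it applies to the minimal operator with \emph{apparent}, rather than absent, singularities at $0$. For Beukers' lifting, I must check precisely how it turns a single linear relation among values into a functional one, including homogenization and shifting the evaluation point. The transcendence-type input at the end, namely that $J_0$ is not an exponential polynomial, is easy by comparison.
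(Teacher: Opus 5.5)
Your argument is correct in outline, but it takes a genuinely different route from the paper's.

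\textbf{How the two routes differ.} The paper never determines $f$. It uses Theorem~\ref{thAB} to write $f(x)=J_0(\alpha x)+(x-1)g(x)$ with $g$ an $E$-function. It then applies an element $\sigma$ of the differential Galois group and compares coefficients of $\log x$, using that $\sigma f$ is holomorphic at $0$ and that $\sigma g$ is a Nilsson--Gevrey series there. Evaluating at $x=1$ and using $J_0(\alpha)\neq 0$ kills the logarithmic part of $\sigma J$. Hence $J'/J\in\Qbar(x)$, contradicting Siegel's algebraic independence of $J_0,J_0'$.

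You instead prove the global statement that such an $f$ must be an exponential polynomial $\sum P_i(x)e^{\beta_i x}$ over $\Qbar$. After that you only need $J_0(\alpha)\notin\Span_{\Qbar}\{e^\beta:\beta\in\Qbar\}$. Your route therefore gives more: the conclusion holds for every $\xi\in\EE$ outside this span. This is sharp, since $f=\sum c_ie^{\beta_i x}$ realizes any $\xi=\sum c_ie^{\beta_i}$ with no finite singularity, so you get an exact characterization at the point $0$. The paper's route, in contrast, uses only local information at $0$ together with Theorem~\ref{thAB} and Siegel's results on $J_0$. It also treats homogeneous, inhomogeneous and apparent cases at once, since all it needs is that $\sigma f$ is holomorphic at $0$.

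\textbf{Three points need repair.}

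First, $L$ need not be an $E$-operator. The structure theorem must therefore be used in the form: an $E$-function whose minimal operator has only apparent singularities in $\C$ lies in $\Qbar[x,e^{\beta x}:\beta\in\Qbar]$. You can obtain this in either of two ways. One is to remove the apparent singularities (pass to a left multiple of $L$ with no finite singularity, which is classical) and quote the version for equations without finite singularities. The other is to argue directly: the Laplace transform $\sum a_nz^{-n-1}$ of $f$ satisfies a Fuchsian equation. Entireness of all solutions of $L$, applied to the Hankel integrals around each singular point, forces this transform to have trivial monodromy, hence to be rational.

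Second, homogenize $L_0y=P_0$ with $P_0\frac{d}{dx}-P_0'$, whose composite with $L_0$ has solution set $\{y: L_0y\in\C P_0\}$. A power of $\frac{d}{dx}$ can create a logarithm at $0$. For example, all solutions $x^2+cx$ of $xy'-y=x^2$ are holomorphic, but $(\frac{d}{dx})^3(xy'-y)=0$ has the solution $x\log x$.

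Third, Beukers' lifting does not give $J_0(\alpha x)=\sum_i q_i(x)e^{\beta_i x}$. The lifted relation may involve $J_0'(\alpha x)$ with a coefficient vanishing at $1$. The clean fix is to show that $J_0(\alpha x)$, $J_0'(\alpha x)$ and the $e^{\beta_i x}$ (distinct $\beta_i$, including $\beta=0$ if needed) are $\Qbar(x)$-linearly independent. Suppose a nonzero $q_0J+q_1J'$ lay in the $\Qbar(x)$-span of the exponentials. By irreducibility of Bessel's operator, this would put $J$ itself in that span, which your asymptotic argument excludes. Then apply Theorem~\ref{thbeukers} at the regular point $1$.
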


In this theorem, and throughout the paper, in writing {\em (in)homogeneous} we mean that the statement holds with both {\em homogeneous} and {\em inhomogeneous} instead.

Recall that an inhomogeneous differential equation is an equation of the form $L_0y=P_0$ where
 $L_0  = \sum_{i=0}^{\mu_0} a_i(x) (\frac{d}{dx})^i \in \Qbar[x, \frac{d}{dx}]\setminus\{0\}$ and $P_0\in \Qbar[x]$; it is homogeneous if $P_0=0$. We may 
  assume that $a_{\mu_0}\neq 0 $ and $\gcd(a_0,\ldots,a_{\mu_0},P_0)=1$; then a singularity of the equation $L_0y=P_0$ is, by definition, a zero of $a_{\mu_0}$.

We recall also that a singularity is {\em apparent} if the corresponding differential equation has a basis of solutions holomorphic at that point.   

\medskip

At the point $1$, a positive answer to the analogue of Question \ref{q1} follows from André's results \cite{andre} on $E$-operators (generalized to $E$-functions in Siegel's sense in \cite{lepetit}). Indeed, such an operator has no non-zero finite singularity, and any $E$-function is a solution of an $E$-operator. However, the minimal homogeneous differential equation of an $E$-function may have a singularity at $1$ (which is then apparent since it is a right-factor of an $E$-operator); this happens for instance with $f(x) = (x-1)e^x$. We may strengthen Question \ref{q1} by asking the  minimal (in)homogeneous differential equation of $f$ not to have a singularity at the point 1. We shall prove the the following result, which gives a positive answer to this stronger question:

\begin{theo}\label{thu} For any $\xi\in \EE$, there is an $E$-function $f$ such that $f(1)=\xi$ and 1 is not a singularity of the minimal (in)homogeneous differential equation satisfied by $f$.
\end{theo}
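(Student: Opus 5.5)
Write $\xi=g(\alpha)$ with $g$ an $E$-function and $\alpha\in\Qbar$. The plan is to perturb the obvious candidate $x\mapsto g(\alpha x)$, which takes the value $\xi$ at $1$, so that its minimal equation becomes regular at $1$ while $f(1)=\xi$ is preserved. A natural family is
$$f(x)=g(\alpha x)+(x-1)h(x),$$
where $h$ is a suitably generic $E$-function. Since $E$-functions form a ring stable under $x\mapsto \beta x$, any such $f$ is an $E$-function with $f(1)=\xi$. What remains is to choose $h$ so that $1$ is not a singularity of the minimal homogeneous equation $L$ of $f$, nor of the minimal inhomogeneous equation $L_0y=P_0$.

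\textbf{Step 1 (finitely many bad points).} By André's theory of $E$-operators, every $E$-function $F$ is annihilated by an $E$-operator, which has no non-zero finite singularity. The minimal operator $L_F$ of $F$ is a right factor of it. Therefore the non-zero singularities of $L_F$ are apparent: every solution of $L_F$ is entire. Next, in the vector space $V_F$ spanned by the local solutions of $L_F$, consider the Wronskian. Its zeros away from $0$ are exactly the apparent singularities. Since $L_F$ is monic after division by its leading coefficient, the leading coefficient is, up to constants, the denominator of a rational function built from the Wronskian. Thus the set $S_F$ of non-zero singularities is finite. A point $\beta\neq0$ is a singularity precisely when the evaluation map $V_F\to\C^{\mu}$, $y\mapsto (y(\beta),\dots,y^{(\mu-1)}(\beta))$, is not injective.

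\textbf{Step 2 (moving the bad points away from $1$).} Rescaling helps here. If $F$ has minimal operator with singular set $S$, then $F(\lambda x)$ has singular set $\lambda^{-1}S$. So I would first choose $\lambda\in\Qbar^*$ such that $1\notin\lambda^{-1}S_{G}$ for a suitable auxiliary function $G$, and then try to realise $\xi$ as $G(\lambda)$. Concretely, I would take
$$G(x)=g(\alpha x/\lambda)+\Bigl(\tfrac{x}{\lambda}-1\Bigr)e^{x}$$
(or with $e^{\gamma x}$ for generic algebraic $\gamma$), so that $G(\lambda)=\xi$, and set $f(x)=G(\lambda x)$. The difficulty is that $S_G$ depends on $\lambda$. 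I would handle this with a genericity argument. Consider the two-parameter family $f_{\lambda,\gamma}(x)=g(\alpha x)+(x-1)e^{\gamma x}$. Its solution space contains $V_{g(\alpha\cdot)}$ and functions of the form $(x-1)e^{\gamma x}$ times solutions, so its minimal operator is the right lcm of $L_{g(\alpha\cdot)}$ and $L_{(x-1)e^{\gamma x}}$, or a right factor of this lcm. For generic $\gamma$ (exponent $\gamma$ not among the finitely many exponential growth rates, i.e. the singularities at $\infty$ of $g$'s Fourier--Laplace transform), the two solution spaces are in direct sum. Then $f$ is a solution whose minimal space $V_f$ projects onto components of the form $(x-1)e^{\gamma x}c$ and $v\in V_{g(\alpha\cdot)}$.

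\textbf{Step 3 (the key injectivity at $x=1$), the main obstacle.} We must show that the evaluation of jets at $1$ is injective on $V_f$. The factor $(x-1)$ is exactly what kills the value at $1$, so I expect the apparent singularity at $1$ to be hardest to avoid. Rather than the factor $(x-1)$, I would try to use a combination $f=g(\alpha x)+c\,(e^{\gamma x}-e^{\gamma})$, with $c,\gamma$ generic. If that still fails, I would fall back on the characterization of Step 1: the set of $\beta$ where jet evaluation on $V_f$ fails is the zero set of a Wronskian. I would then show that this Wronskian, as an analytic function of the parameters $(\gamma,c)$ at $x=1$, is not identically zero by a degeneration argument, for instance letting $\gamma\to\infty$ or $c\to\infty$, where the exponential part dominates and the Wronskian factorizes. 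The inhomogeneous case is then treated the same way, applied to $L_0$, whose solution space is $V_f$ plus a polynomial particular solution.
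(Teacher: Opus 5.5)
Your Step 3 is not merely unfinished: the family you propose fails in general, and the fallback is not admissible. The ansatz $f=g(\alpha x)+(x-1)h(x)$ is harmless, since by Theorem~\ref{thAB} every $E$-function with $f(1)=\xi$ has this form. The problem is the choice $h=e^{\gamma x}$ with $\gamma$ generic, made precisely so that the two parts are in direct sum. This makes the solution space of the minimal operator of $f$ contain that of $g(\alpha x)$, so a sufficiently degenerate apparent singularity of the latter at $1$ survives.

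Concretely, write $e=g(1)$ with the $E$-function $g(x)=e^x+(x-1)^3e^{2x}$, and let $\gamma\in\Qbar\setminus\{0,1,2\}$. Any operator annihilating $f=e^x+(x-1)^3e^{2x}+(x-1)e^{\gamma x}$ annihilates each exponential component separately. So the minimal operator has order $3$, and its solution space contains $(x-1)^3e^{2x}$, whose $2$-jet at $1$ vanishes. The Wronskian therefore vanishes at $1$, so $1$ is a singularity for every such $\gamma$. Equivalently, $f(1)=e$, $f'(1)=e+e^{\gamma}$, $f''(1)=e+2\gamma e^{\gamma}$ are $\Qbar$-linearly dependent, as Corollary~\ref{corohom} predicts.

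Hence no degeneration argument in $\gamma$ can show that the Wronskian at $1$ is not identically zero. Your rescaling in Step 2 only leads back to the same family $g(\alpha x)+(x-1)e^{\lambda x}$. The alternative $g(\alpha x)+c(e^{\gamma x}-e^{\gamma})$ is never an $E$-function when $c\gamma\neq0$: algebraicity of the coefficients $c\gamma$ and $c\gamma^2$ would force $c,\gamma\in\Qbar$, and then the constant term $g(0)+c(1-e^{\gamma})$ is transcendental by Hermite--Lindemann. More generally, your parameters must be algebraic and the minimal operator does not vary continuously with them, so genericity over $\C$ is not directly usable.

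The missing idea is to modify $f$ \emph{inside} a suitable differential module by controlling polynomial coefficients. The Wronskian should be replaced by the arithmetic criterion of Corollary~\ref{corohom} and Proposition~\ref{propinhom}: $1$ is not a singularity iff $f(1),\ldots,f^{(\mu-1)}(1)$ (resp. $1,f(1),\ldots,f^{(\mu_0-1)}(1)$) are $\Qbar$-linearly independent.

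In the paper, Theorem~\ref{thu} is the case $N=1$, $\alpha_1=1$, $T=1$ of Theorem~\ref{thinterp}. Lemma~\ref{propdesing} yields $\Qbar(x)$-independent $E$-functions $h_1,\ldots,h_M$ satisfying a system over $\Qbar[x,1/x]$. The constant $1$, an $E$-function with value $\xi$ at $1$, and an auxiliary $E$-function with value $e$ at some $\alpha_0\neq 1$ are all $\Qbar[x]$-combinations of the $h_i$. By Theorem~\ref{thbeukers}, $\Span_{\Qbar}(h_1(1),\ldots,h_M(1))$ has dimension $M$ and contains $1$ and $\xi$. One then takes $f=\sum S_ih_i$, with the jets of the polynomials $S_i$ at $1$ and $\alpha_0$ prescribed by Hermite interpolation. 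They are chosen so that $f(1)=\xi$ and $f(1),\ldots,f^{(M-1)}(1)$ is a $\Qbar$-basis of this space, built via the exchange lemma and ending with $1$ to handle the inhomogeneous equation. The auxiliary point forces the minimal order to be exactly $M$.

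Your final sentence on the inhomogeneous case would also need this separate criterion. The operators $L$ and $L_0$ can differ in order, and their singularities at $1$ are governed by different independence conditions.
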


This result holds trivially if $\xi$ is algebraic. Otherwise it is a special case (with $N=1$, $\alpha_1=1$, $T=1$, $\xi_{1,0}=\xi$) of the following much more general interpolation theorem.

\begin{theo} \label{thinterp}
Let $N,T\geq 1$, and $\alpha_1,\ldots,\alpha_N$ be pairwise distinct non-zero algebraic numbers. Let $\xi_{n,t}\in\EE$ for $1\leq n \leq N$ and $0\leq t \leq T-1$. Then there exists an $E$-function $f$ such that:
\begin{itemize}
\item For any $1\leq n \leq N$ and  any $0\leq t \leq T-1$, we have $f^{(t)}(\alpha_n) = \xi_{n,t}$.
\item Let $Ly=0$, with $L\in \Qbar[x,\frac{d}{dx}]$, be the minimal homogeneous differential equation satisfied by $f$; denote by $\mu$ its order. Then $\mu \geq T+1$ and for any $n$, we have the following equivalence: $\alpha_n$ is a singularity of  $L$ if, and only if, the values $\xi_{n,t}$ (for $0\leq t \leq T-1$) are linearly dependent over $\Qbar$. 
\item Let $L_0y=P_0$, with $L_0\in \Qbar[x,\frac{d}{dx}]$ and $P_0\in\Qbar[x]$, be the minimal inhomogeneous differential equation satisfied by $f$. Then $L_0$ has order $\mu-1$ and for any $n$, we have the following equivalence: $\alpha_n$ is a singularity of  $L_0$ if, and only if, the values 1 and $\xi_{n,t}$ (for $0\leq t \leq T-1$) are linearly dependent over $\Qbar$.
\end{itemize}
\end{theo}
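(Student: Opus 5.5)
Each $\xi_{n,t}$ will be written as a value of an $E$-function, and then all these functions will be glued into one $E$-function by algebraic multipliers that do the interpolation. Since $\EE$ is a ring and contains $\Qbar$, we may first use a $\Qbar$-basis of the $\Qbar$-span of the $\xi_{n,t}$ (together with $1$). We write each $\xi_{n,t}$ as $g_{n,t}(1)$ for some $E$-function $g_{n,t}$; a change of variable $x\mapsto \beta x$ is allowed because $g(\beta x)$ is again an $E$-function. Next we look for $f$ of the form
$$f(x)=\sum_{n,t} Q_{n,t}(x)\,h_{n,t}(x)+R(x),$$
where the $Q_{n,t},R\in\Qbar[x]$ are chosen by Hermite interpolation at the points $\alpha_1,\dots,\alpha_N$. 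The $h_{n,t}$ are $E$-functions that are algebraically independent over $\Qbar(x)$, or at least generic enough, and satisfy $h_{n,t}(\alpha_n)=\xi_{n,t}$. A natural way to get such $h_{n,t}$ is to take $h_{n,t}(x)=g_{n,t}(x/\alpha_n)\cdot$(product of suitable factors). We can then add terms such as $e^{\lambda x}(x-\alpha_1)^T\cdots(x-\alpha_N)^T$ with many distinct $\lambda\in\Qbar$. These terms do not change the values $f^{(t)}(\alpha_n)$, and they force the minimal operator to have large order and generic behaviour. Hermite interpolation, i.e.\ the Chinese remainder theorem in $\Qbar[x]$ modulo $(x-\alpha_n)^T$, ensures that the first condition holds.

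The real content is the control of the minimal operator $L$. The key tool is the local theory at a point $\alpha=\alpha_n$ which is not a singularity of $L$. There the solution space is $\mu$-dimensional and every solution $y$ is determined by $(y(\alpha),\dots,y^{(\mu-1)}(\alpha))$. The plan is to build $f$ so that its minimal operator $L$ is a right factor (the minimal one) of the operator annihilating all the pieces. The solution space of $L$ is the $\Qbar(x)$-differential module generated by $f$; by Galois-theoretic genericity it contains many independent solutions. These solutions come from \emph{Galois conjugation} (differential Galois group acting on the pieces), and the conjugates of $f$ all vanish to order $\geq T$ at $\alpha_n$ except for contributions from the pieces carrying $\xi_{n,t}$. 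We must then prove two implications.

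\emph{If the $\xi_{n,t}$ are linearly dependent}, say $\sum c_t\xi_{n,t}=0$, consider the operator $L$ and the function $\sum_t c_t f^{(t)}$. Suppose $\alpha_n$ were regular for $L$. Using André's/Beukers' theorem (the values of $E$-functions at a non-singular point of the minimal equation: the $\Qbar$-relations between $f(\alpha),\dots,f^{(\mu-1)}(\alpha)$ come from functional relations, a refined Siegel--Shidlovskii theorem), a $\Qbar$-linear relation among $f^{(t)}(\alpha_n)$ with $t<T\le\mu-1$ would have to lift to a relation $\sum R_t(x)f^{(t)}(x)=0$ of order $<\mu$, contradicting minimality. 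Hence $\alpha_n$ is singular. This also explains the need for $\mu\ge T+1$: the values $f^{(t)}(\alpha_n)$ for $t\le T-1$ must lie among the first $\mu-1$ derivatives. \emph{Conversely}, if the $\xi_{n,t}$ are independent, we must exhibit regularity at $\alpha_n$. Here the construction is used: we design $f$ so that its minimal operator is an explicit operator. For example, we choose the pieces $h$ so that $f$ is a linear combination, with polynomial coefficients, of functions annihilated by $E$-operators, and the LCLM has singularities only at $0$ and at zeros of the polynomials $Q$. We then check that the apparent singularities created at $\alpha_n$ by the factor structure disappear exactly when the jets are independent. I expect this to be the main obstacle: one needs a precise description of $L$ as the operator whose solution space is spanned by the conjugates of $f$. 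Independence of the $\xi_{n,t}$ is then converted, via Beukers' lifting again, into non-vanishing of the Wronskian at $\alpha_n$, and hence regularity. The inhomogeneous part is obtained the same way after adjoining the constant function $1$: $L=\partial\circ\frac{1}{P_0}L_0$ up to units, so $L_0$ has order $\mu-1$. The relevant linear independence is then that of $1,\xi_{n,0},\dots,\xi_{n,T-1}$.
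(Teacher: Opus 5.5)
\textbf{Gap: the converse implication.} Your argument for ``$\xi_{n,0},\ldots,\xi_{n,T-1}$ dependent $\Rightarrow$ $\alpha_n$ singular'' is fine once $\mu\ge T$ is known: it is Theorem~\ref{thbeukers} applied to $f,f',\ldots,f^{(\mu-1)}$. The converse is not established.

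By Corollary~\ref{corohom}, $\alpha_n$ is a regular point of the minimal operator $L$ if and only if \emph{all} the values $f(\alpha_n),\ldots,f^{(\mu-1)}(\alpha_n)$ are $\Qbar$-linearly independent, where $\mu$ is the order of $L$. Your construction prescribes only $f^{(t)}(\alpha_n)$ for $t<T$. It deliberately makes $\mu$ larger than $T$, and leaves $f^{(t)}(\alpha_n)$ for $T\le t\le \mu-1$ uncontrolled. Contributions vanishing to order at least $T$ at $\alpha_n$ can then produce an apparent singularity at $\alpha_n$. This covers both the exponential terms you add and the uncontrolled Taylor tails of the $Q_{n,t}h_{n,t}$.

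For instance, take $T=1$ and $\alpha_1=1$. The $E$-function $f(x)=e^x+(x-1)^2e^{2x}$ satisfies $f(1)=e\neq 0$, but also $f'(1)=e$. Indeed its minimal equation has order $2$ and leading coefficient $x^2-1$.

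The Galois/LCLM/Wronskian part of your sketch does not repair this. The minimal operator is only a right factor of the LCLM and may acquire apparent singularities anywhere. Independence of $T$ values of the single function $f$ says nothing about a $\mu\times\mu$ Wronskian at $\alpha_n$.

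\textbf{The missing idea.} You need to know $\mu$ exactly in advance and prescribe the whole jet $f(\alpha_n),\ldots,f^{(\mu-1)}(\alpha_n)$. This requires a finite-dimensional differential module whose values at $\alpha_n$ are controlled. The paper proceeds as follows.
\begin{itemize}
\item It takes $E$-functions $f_{n,t}$ with $f_{n,t}(\alpha_n)=\xi_{n,t}$, and embeds $1$ and the $f_{n,t}$ in a differential $\Qbar(x)$-space of $E$-functions of dimension $M$.
\item It applies Beukers' desingularization lemma (Lemma~\ref{propdesing}). This gives a basis $h_1,\ldots,h_M$ satisfying a system over $\Qbar[x,1/x]$, with $1$ and every $f_{n,t}$ being $\Qbar[x]$-combinations of the $h_i$.
\item Then $V_n=\Span_{\Qbar}(h_1(\alpha_n),\ldots,h_M(\alpha_n))$ has dimension $M$ and contains $1$, the $\xi_{n,t}$ and all $h_i^{(k)}(\alpha_n)$.
\item One completes $\xi_{n,0},\ldots,\xi_{n,T-1}$ (resp.\ $1,\xi_{n,0},\ldots,\xi_{n,T-1}$) to a basis of $V_n$ by extra values $\xi_{n,t}$, $T\le t\le M-1$.
\item One takes $f=\sum_i S_ih_i$, with $S_i\in\Qbar[x]$ chosen by Hermite interpolation, triangularly in $t$, so that $f^{(t)}(\alpha_n)=\xi_{n,t}$ for all $t\le M-1$.
\item An auxiliary point $\alpha_0$ with values $e,e^2,\ldots,e^T$ forces $\mu=M\ge T+1$.
\end{itemize}

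\textbf{Two smaller points.} First, plain Hermite interpolation of the $Q_{n,t}$ with $h_{n,t}(\alpha_n)=\xi_{n,t}$ does not give the first bullet. Transcendental numbers $h_{n,s}^{(k)}(\alpha_n)$ enter $f^{(t)}(\alpha_n)$ and must be compensated, which is the role of the triangular choice. Second, $\mu_0=\mu-1$ presupposes $P_0\neq 0$, i.e.\ that $1$ lies in the $\Qbar(x)$-span of $f,f',\ldots$. The paper ensures this by putting $1$ into the module; your construction does not guarantee it.
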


\begin{Remark*}  Let $\K$ be a number field that contains $\alpha_1,\ldots,\alpha_N$, and such that for any $n$ and any $t$ there exists an $E$-function $f_{n,t}$ with coefficients in $\K$ such that $f_{n,t}(1) =\xi_{n,t} $. Then the  $E$-function $f$ we construct has coefficients in $\K$. 
\end{Remark*}

\bigskip

The structure of this paper is as follows. In \S \ref{sec2} we prove Theorem~\ref{thz}, and then in \S \ref{sec3} we study non-zero singularities of minimal equations of $E$-functions, starting in \S \ref{subsec31} with general results of independent interest and proving  Theorem~\ref{thinterp} in \S \ref{subsec33}.

\section{Proof of Theorem~\ref{thz}} \label{sec2}

The proof of Theorem~\ref{thz} is based of the following result of André \cite{andre} for $E$-functions in the strict sense with rational coefficients, generalized by Beukers \cite{beukers} to all $E$-functions in the strict sense and by Lepetit \cite[p. 143, Proposition 18]{lepetit} to all $E$-functions in Siegel's sense.

\begin{theo} \label{thAB} Let $f$ be an $E$-function such that $f(1)=0$. Then $f(x)/(x-1)$ is an $E$-function.
\end{theo}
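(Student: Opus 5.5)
The plan is to verify directly the three conditions $(i)$--$(iii)$ of the definition for $g(x):=f(x)/(x-1)$. The denominator condition and the differential equation are easy; the size of the Galois conjugates of the coefficients is where the hypothesis $f(1)=0$ must be used, and I expect it to need a deep input.

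\emph{Condition $(i)$ and the coefficients.} If $Lf=0$ with $L\in\Qbar[x,\frac{d}{dx}]$, then $g$ is annihilated by $L\circ(x-1)$, which is again non-zero; so $(i)$ holds. Writing $f(x)=\sum a_n x^n/n!$ and $g(x)=\sum b_n x^n/n!$, the identity $(x-1)g=f$, i.e. $-g=f\cdot\sum_{m\ge 0}x^m$, gives
\[
b_n=-n!\sum_{k=0}^{n}\frac{a_k}{k!}.
\]
Since $n!/k!$ is an integer, $d_nb_m$ is an algebraic integer for all $m\le n$ with the same $d_n$ as for $f$. Hence $(iii)$ holds with no further work.

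\emph{Condition $(ii)$ and the key claim.} Fix an embedding $\sigma$ of the number field generated by the $a_n$, and set $f^\sigma(x):=\sum \sigma(a_n)x^n/n!$. Then
\[
\sigma(b_n)=-n!\sum_{k\le n}\frac{\sigma(a_k)}{k!}=n!\sum_{k>n}\frac{\sigma(a_k)}{k!}-n!\,f^\sigma(1).
\]
If $f^\sigma(1)=0$, the first expression on the right is at most $n!\sum_{k>n}k!^{\varepsilon}/k!\ll n!^{\varepsilon}$, using $(ii)$ for $f$ (applied to $\sigma(a_k)$). So $(ii)$ for $g$ reduces to the claim that $f(1)=0$ implies $f^\sigma(1)=0$ for every $\sigma$. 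This is genuinely non-trivial, since a vanishing value need not be preserved by conjugating coefficients in general. It is the main obstacle.

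\emph{Proving the key claim.} I would first invoke André's theorem (extended to Siegel's sense by Lepetit) that $f$ is annihilated by an $E$-operator, whose only finite singularity is $0$. Then $Y=\tra(f,f',\dots,f^{(\mu-1)})$ satisfies $Y'=AY$ with $A\in\Qbar(x)^{\mu\times\mu}$ having poles only at $0$. Next I would apply Beukers' refinement of the Siegel--Shidlovskii theorem: every $\Qbar$-linear relation among the coordinates of $Y(1)$ is the specialization at $x=1$ of a $\Qbar[x]$-linear relation among the coordinates of $Y(x)$. The relation $f(1)=0$ thus lifts to an identity
\[
\sum_i P_i(x)f^{(i)}(x)=0, \qquad P_0(1)=1,\quad P_i(1)=0 \ (i\ge 1).
\]
Applying $\sigma$ coefficientwise to this identity of power series gives $\sum_i P_i^\sigma (f^\sigma)^{(i)}=0$, with $P_0^\sigma(1)=1$ and $P_i^\sigma(1)=0$ for $i\ge1$. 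Evaluating at $x=1$ yields $f^\sigma(1)=0$.

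The lifting step is exactly where the deep transcendence input lies. For $E$-functions in Siegel's sense, one must check that Beukers' argument goes through with Lepetit's extension of André's theory.
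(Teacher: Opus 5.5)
Your treatment of $(i)$ and $(iii)$ is correct, and so is the reduction of $(ii)$. Since $n!\,k!^{\varepsilon-1}=n!^{\varepsilon}(n!/k!)^{1-\varepsilon}$, the tail estimate works, so $(ii)$ for $f(x)/(x-1)$ follows once $f^\sigma(1)=0$ for every embedding $\sigma$. This reduction is also reversible. If $f(x)/(x-1)$ is an $E$-function, then $\sigma(b_n)/n!=-\sum_{k\le n}\sigma(a_k)/k!$ tends both to $-f^\sigma(1)$ and to $0$. So your ``key claim'' is not an intermediate lemma: it is Theorem~\ref{thAB} restated, and the whole proof rests on how you establish it.

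That is where the argument has a gap. You obtain the key claim from Beukers' lifting theorem at the non-singular point $1$ of an $E$-operator. As far as I know, Beukers' work runs the other way. His lifting theorem, its linear case (Theorem~\ref{thbeukers} here) and the desingularization (Lemma~\ref{propdesing}) are derived \emph{from} André's division theorem, which Beukers extends to all $E$-functions in the strict sense. Roughly, a non-trivial relation $\sum c_i h_i(\xi)=0$ between the values of a desingularized basis is excluded because $\sum_i c_i h_i/(x-\xi)$ would again be an $E$-function in the $\Qbar[x]$-module spanned by the $h_i$.

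Quoted this way, your argument is circular. What it actually shows is an equivalence: granted André's theorem that every $E$-function satisfies an $E$-operator, Theorem~\ref{thAB} is equivalent to the lifting statement at non-zero non-singular points, or even just to linear independence there. Indeed, split off the $\Qbar[x]$-relations among $f,\dots,f^{(\mu-1)}$ with a unimodular polynomial matrix and apply Theorem~\ref{thbeukers}. This gives $f=(x-1)\sum_i q_i g_i$ directly, with $q_i\in\Qbar[x]$ and $g_i$ $E$-functions. The arithmetic input, however, is still missing.

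The paper does not reprove Theorem~\ref{thAB}; it cites it from André, Beukers and Lepetit, where that input comes from André's theory of $E$-operators (Fourier--Laplace duality with $G$-operators), not from a Siegel--Shidlovskii-type lifting. To make your outline a proof, you need either that route, or a proof of the lifting statement at $x=1$ that does not itself go through division by $x-1$.
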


We shall use also the fact that the Bessel function $J_0$ is a solution of  $Ly=0$, where
\begin{equation}
    \label{eq0thd} 
L := x \Big( \frac{d}{dx}\Big)^2 +    \frac{d}{dx}  + x.
\end{equation}
Of course $0$ is a singularity of $L$; a basis of local solutions at $0$ is given by $(J_0(x),  J_0(x)\log x + \tilde{J}_0(x))$ where   $\tilde{J}_0$ is an $E$-function.

Let $f$ be an  $E$-function such that $f(1)=J_0(\alpha)$. Theorem~\ref{thAB} applied to the function $f(x) - J_0(\alpha x)$ provides an $E$-function $g$ such that 
\begin{equation}
    \label{eq1thd} f(x) = J_0(\alpha x) + (x-1)g(x).
\end{equation}
Let $\calD\in\Qbar[x, \frac{d}{dx}]$ be a non-zero differential operator that annihilates  $f(x)$, $J_0(\alpha x) $ and $ g(x)$. We denote by $V$ a Picard-Vessiot extension of $\Qbar(x)$ that contains a basis of solutions of $\calD$, and by $G$ the corresponding differential Galois group. For simplicity we write $J(x) = J_0(\alpha x)$.

Let $\sigma\in G$. Then $\sigma$ can be seen as a field automorphism of $V$ that commutes with derivation and leaves any element of $\Qbar(x)$ invariant. Applying $\sigma$ to Eq.~\eqref{eq1thd} yields
\begin{equation}
    \label{eq2thd} (\sigma f)(x) = (\sigma J)(x) + (x-1)(\sigma g)(x).
\end{equation}
The function $\sigma J$ is also a solution of the differential operator obtained from \eqref{eq0thd} by changing $x$ to $\alpha x$, so there exist scalars $\delta,\beta$ such that $\sigma J=   \delta J+ \beta (J \log   + \tilde{J})$ where $\tilde{J}$ is an $E$-function. On the other hand, there exists an $E$-operator $\calD_g$ of which $g$ is a solution, and $\sigma g$ is also a solution of $\calD_g$ so that $\sigma g$ is a Nilsson-Gevrey arithmetic series of order $-1$ (see \cite{andre}). Accordingly, it can be written as  a finite sum
$$\sum_{\nu,j} \lambda_{\nu,j} h_{\nu,j}(x) x^\nu (\log x)^j $$
where $\nu\in\Q$, $j\in\N$, $\lambda_{\nu,j}\in\C$, and $ h_{\nu,j}$ is an $E$-function. At last, assume that $f$ is solution of an (in)homogeneous differential operator of which $0$ is not a singularity (or is an apparent singularity). Then $\sigma f $ is a solution of the same operator, so that  $\sigma f $ is holomorphic at $0$.

Expanding both sides of Eq.~\eqref{eq2thd} as polynomials in $\log x$, and taking the coefficient of $\log x$, yields
\begin{equation}
    \label{eq3thd} 0 =  \beta J (x) + (x-1) \sum_{\nu} \lambda_{\nu,1} h_{\nu,1}(x) x^\nu.
\end{equation}
Now all $E$-functions $h_{\nu,1}$ are holomorphic at $1$, so that taking $x=1$ in Eq.~\eqref{eq3thd} yields $\beta=0$ (since $J(1)= J_0(\alpha)\neq 0$ because it is a transcendental number for all $\alpha\in\Qbar\etoile$ by Siegel's theorem \cite{siegel}). 

\medskip

We have proved that for any $\sigma\in G$ there exists a scalar $\delta$ such that $\sigma J = \delta J$. This implies
$$
\sigma\Big( \frac{  J'}{  J}\Big) = \frac{ \sigma(J)'}{\sigma(J)} = \frac{\delta J'}{\delta J}= \frac{  J'}{  J}
$$
so that $J'/J\in\Qbar(x)$ (see for instance \cite[Proposition 1.9 and Theorem 1.6]{DBJAW}). This contradicts another result of Siegel \cite{siegel} that $J_0$ and $J_0'$ are algebraically independent over  $\Qbar(x)$.

\section{Non-zero singularities: proof of Theorem~\ref{thinterp}}\label{sec3}

\subsection{Non-zero singularities and linear dependence of values}\label{subsec31}

The following version of the Siegel-Shidlovskii theorem is due to Beukers \cite[Corollary 1.4]{beukers} in the strict sense, and to Andr\'e \cite{andre2} in Siegel's sense.

\begin{theo} \label{thbeukers}
Let  $f_{1}$, \ldots, $f_M$ be $E$-functions, linearly independent over $\Qbar(x)$, such that the  vector $\tra (f_{1} , \ldots,  f_M)$ is solution of a first-order linear differential system with entries in $\Qbar(x)$. 

Let $\alpha\in \Qbar\etoile$. If $\alpha$ is not a singularity of this system, then  $f_1(\alpha)$, \ldots, $f_M(\alpha)$ are  linearly independent over $\Qbar$.
\end{theo}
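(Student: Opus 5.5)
Since the paper attributes this statement to Beukers and André, I would follow Beukers' strategy: reduce linear independence of values to a \emph{lifting} statement for relations, and prove the lifting using Theorem~\ref{thAB} together with the differential system. Write $Y'=AY$ for the system, with $A\in M_M(\Qbar(x))$, and let $T\in\Qbar[x]$ be a common denominator of the entries of $A$. Since $\alpha$ is not a singularity, $T(\alpha)\neq 0$. Let $\vec f=\tra(f_1,\ldots,f_M)$. The goal is to show that if $c\in\Qbar^M$ satisfies $c\cdot\vec f(\alpha)=0$, then $c=0$.

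First, I would replace Theorem~\ref{thAB} (stated at the point $1$) by its version at $\alpha$, obtained by the change of variable $x\mapsto \alpha x$: if an $E$-function $g$ vanishes at $\alpha$, then $g/(x-\alpha)$ is an $E$-function. Next, consider the $\Qbar[x,1/T]$-module
$$\mathcal M=\Big\{\sum_i p_i f_i : p_i\in \Qbar[x,1/T]\Big\}.$$
It is stable under $d/dx$ by the system. Because the $f_i$ are $\Qbar(x)$-linearly independent, it is free of rank $M$. I would then study the specialization map $\mathcal M\to\C$, $F\mapsto F(\alpha)$, which is well defined since $T(\alpha)\neq0$. Its image is $\Span_{\Qbar}(f_1(\alpha),\ldots,f_M(\alpha))$ plus contributions of $(x-\alpha)\mathcal M$. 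The claim to prove is that the induced $\Qbar$-linear map $\mathcal M/(x-\alpha)\mathcal M\cong\Qbar^M\to\C$ is injective.

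For the key step, suppose $g=c\cdot\vec f$ with $c\neq 0$ and $g(\alpha)=0$. Then $h=g/(x-\alpha)$ is an $E$-function lying in $\mathcal M\otimes\Qbar(x)$. I would now run a descent. Differentiating $g=(x-\alpha)h$ and using the system shows that $g'(\alpha)=h(\alpha)$, and more generally that the vanishing of values propagates to derivatives. The natural way to exploit this is to consider the $E$-operator (André) annihilating $\vec f$: after a gauge transformation by a matrix in $GL_M(\Qbar[x,1/T])$, the relation vector can be moved to a standard basis vector. The equation $g=(x-\alpha)h$ with $h$ an $E$-function would then allow the gauge transformation
$$\mathrm{diag}(x-\alpha,1,\ldots,1)$$
to produce a new system satisfied by an $E$-function vector of the same rank. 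Compared with the original system, this new system carries a strictly smaller local exponent at $\alpha$, where the local exponents are measured by $\mathrm{ord}_\alpha\det$ of a fundamental matrix.

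Iterating this, each relation at $\alpha$ lowers a non-negative integer invariant attached to $\alpha$: the order of vanishing at $\alpha$ of the Wronskian-type determinant of $\mathcal M$. At a non-singular point, however, the Wronskian $\det(f_i^{(j)})$, or the determinant of a fundamental matrix built from $\vec f$ and its images under the system, satisfies a scalar first-order equation with coefficient $\mathrm{tr}A$, which is holomorphic at $\alpha$. That determinant therefore cannot vanish at $\alpha$ unless it vanishes identically, and it does not vanish identically by the $\Qbar(x)$-independence of the $f_i$. The descent thus contradicts the invariant being $0$ at a regular point. I expect the main obstacle to be this descent step: showing that one relation at $\alpha$ genuinely produces a new $E$-function system of the same rank whose invariant at $\alpha$ has decreased. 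Making this rigorous requires the full strength of Theorem~\ref{thAB} applied to every element of $\mathcal M$ vanishing at $\alpha$, together with the fact that $\alpha$ remains non-singular for all the gauge-transformed systems considered.
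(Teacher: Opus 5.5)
\textbf{There is a genuine gap: the descent step yields no contradiction.} The paper does not prove this statement (it quotes Beukers and André), so your argument has to stand on its own.

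Write $\vec f=U\phi$ with $U\in GL_M(\Qbar)$ and $\phi_1=c\cdot\vec f=(x-\alpha)h$, and set $\tilde\phi=D^{-1}\phi$ with $D=\mathrm{diag}(x-\alpha,1,\ldots,1)$. The new matrix $\tilde A=D^{-1}U^{-1}AUD-D^{-1}D'$ satisfies $\operatorname{tr}\tilde A=\operatorname{tr}A-\frac{1}{x-\alpha}$. So $\alpha$ is \emph{necessarily} a singularity of the new system, and the property you hope for at the end (that $\alpha$ stays non-singular) is false. The determinant of a fundamental matrix just acquires a simple pole at $\alpha$, and that is not contradictory: a vector of E-functions can satisfy a system singular at a non-zero point. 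You only know that the one solution $\tilde\phi$ is holomorphic at $\alpha$, not the other columns of $D^{-1}U^{-1}Y$. Your invariant is non-negative only for systems all of whose solutions are holomorphic at $\alpha$, and that is exactly what is lost after one step. So there is no descent, just one step from $0$ to $-1$.

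Separately, the Wronskian $\det(f_i^{(j)})$ does not satisfy $W'=(\operatorname{tr}A)W$ and can vanish at regular points. For $f_1=e^x$, $f_2=xe^{2x}$, whose system is singular only at $0$, it equals $(1+x)e^{3x}$.

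\textbf{The missing ingredient is André's theorem that $h$ is annihilated by an E-operator, which is regular at $\alpha\neq0$.} This is what lets you enlarge the module while keeping it stable under $d/dx$ and regular at $\alpha$.

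Let $\mathcal O$ be the local ring of $\Qbar[x]$ at $\alpha$. Put $\Lambda=\bigoplus_i\mathcal O f_i$, which is stable because $A$ is regular at $\alpha$, and $\Lambda'=\Lambda+\sum_{k\geq0}\mathcal O h^{(k)}\subset \Span_{\Qbar(x)}(f_1,\ldots,f_M)$. Thanks to the E-operator of $h$, $\Lambda'$ is finitely generated, hence free of rank $M$, and $d/dx$-stable.

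Let $u,v$ be bases of $\Lambda,\Lambda'$ with $u=Qv$, $u'=Bu$, $v'=Cv$, where $B,C$ are regular at $\alpha$. Then $Q'=BQ-QC$, so $(\det Q)'=(\operatorname{tr}B-\operatorname{tr}C)\det Q$. This forces $\det Q(\alpha)\neq0$, i.e. $\Lambda'=\Lambda$. Hence $h=\sum r_if_i$ with $r_i\in\mathcal O$, so $c_i=(x-\alpha)r_i$ for all $i$, and evaluating at $\alpha$ gives $c=0$.

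So your determinant/trace idea does work, but it must be applied to the change-of-basis matrix between two $d/dx$-stable lattices regular at $\alpha$, not to a single diagonal gauge transformation.

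Alternatively, as in the paper's proof of Theorem~\ref{thz}, apply $\sigma$ in the differential Galois group to $c\cdot\vec f=(x-\alpha)h$. Then $\sigma(h)$ is holomorphic at $\alpha$ (again by the E-operator). The $\sigma(\vec f)$ span the whole solution space by the $\Qbar(x)$-independence of the $f_i$. Therefore $\tra c\,Y(\alpha)=0$ for a fundamental matrix $Y$, which is invertible at the regular point $\alpha$, and so $c=0$.
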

(By {\em singularity}, we mean a pole of one of the entries of the matrix of the system.) In particular, let $f$ be an $E$-function. Denote by $\mu$ the minimal order of a non-zero homogeneous linear differential equation satisfied by $f$. Then Theorem~\ref{thbeukers} applies to the functions $f_j := f^{(j-1)}$ with $M=\mu-1$, so that:
\begin{center}
 \em{If $\alpha$ is not a singularity of the non-zero minimal linear differential equation of $f$, then\\
 $f(\alpha)$, $f'(\alpha)$,  \ldots, $f^{(\mu-1)}(\alpha)$ are linearly independent over $\Qbar$.}
\end{center}

The following observation is a converse statement; it is not difficult to prove but we did not find explicitly  it in the literature. It is very convenient to ensure that a point is not a singularity of the minimal differential equation of an $E$-function; it will be used in the proof of Theorem~\ref{thinterp}.

\begin{prop}
\label{propsing}  
Let $L  = \sum_{i=0}^\mu a_i(x) (\frac{d}{dx})^i \in \Qbar[x, \frac{d}{dx}]\setminus\{0\}$ be a  differential operator such that $\gcd(a_0,\ldots,a_\mu)=1$. 

Let $\alpha\in \Qbar$ and $f$ be a solution of the differential equation $Ly=0$, defined around $\alpha$, such that $f(\alpha)$, $f'(\alpha)$,  \ldots, $f^{(\mu-1)}(\alpha)$ are linearly independent over $\Qbar$. Then $a_\mu(\alpha)\neq 0$ so that $\alpha$ is not a singularity of this differential equation $Ly=0$.
\end{prop}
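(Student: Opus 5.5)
We argue by contradiction: assume $a_\mu(\alpha)=0$ and produce a non-trivial $\Qbar$-linear relation among $f(\alpha),\ldots,f^{(\mu-1)}(\alpha)$. Note that $f$ is holomorphic at $\alpha$, since its derivatives there are values, and $\alpha$ is algebraic.

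Let $k\geq 1$ be the multiplicity of $\alpha$ as a root of $a_\mu$. Differentiate $k-1$ times the identity
\begin{equation*}
\sum_{i=0}^\mu a_i(x) f^{(i)}(x)=0
\end{equation*}
and evaluate at $x=\alpha$. By Leibniz' rule this gives
\begin{equation*}
\sum_{i=0}^{\mu}\sum_{j=0}^{k-1}\binom{k-1}{j} a_i^{(j)}(\alpha) f^{(i+k-1-j)}(\alpha)=0 .
\end{equation*}
Since $a_\mu^{(j)}(\alpha)=0$ for $j\le k-1$, the only derivatives $f^{(m)}(\alpha)$ that occur satisfy $m\le \mu+k-2$. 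This alone does not yet yield a relation among the first $\mu$ derivatives, so I would argue differently.

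\textbf{Cleaner route: the case $k=1$.} Evaluating $Ly=0$ directly at $x=\alpha$ gives
\begin{equation*}
\sum_{i=0}^{\mu-1}a_i(\alpha)f^{(i)}(\alpha)=0,
\end{equation*}
a $\Qbar$-linear relation, since all $a_i(\alpha)\in\Qbar$. Because of the coprimality hypothesis $\gcd(a_0,\ldots,a_\mu)=1$ and $a_\mu(\alpha)=0$, some $a_i$ with $i<\mu$ does not vanish at $\alpha$. So the relation is non-trivial, contradicting the assumed linear independence. This works for any $k\geq 1$: we only need $a_\mu(\alpha)=0$ to kill the top term, and the Leibniz computation is unnecessary.

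\textbf{Where the care goes.} The main point needing attention is the use of $\gcd=1$ to guarantee non-triviality. If all $a_i(\alpha)=0$, then $(x-\alpha)$ would divide every $a_i$, contradicting coprimality over $\Qbar[x]$ (recall $\alpha\in\Qbar$). One should also check that the evaluation is legitimate, i.e.\ that $f$ is holomorphic at $\alpha$; this is implicit in the hypothesis that the values $f^{(t)}(\alpha)$ are defined. Hence $a_\mu(\alpha)\neq0$.
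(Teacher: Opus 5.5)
Your argument is correct and is exactly the paper's proof once you discard the Leibniz detour: evaluate $Ly=0$ at $\alpha$ to get $\sum_{i=0}^{\mu-1}a_i(\alpha)f^{(i)}(\alpha)=0$, then use $\gcd(a_0,\ldots,a_\mu)=1$ to see that the $a_i(\alpha)$ cannot all vanish. The multiplicity $k$ plays no role, so the first paragraph (differentiating $k-1$ times) can simply be deleted.
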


\begin{proof}
On the contrary, assume that  $a_\mu(\alpha)=0$; then we have $\sum_{i=0}^{\mu -1} a_i(\alpha) f^{(i)}(\alpha)=0$. This is a $\Qbar$-linear relation between $f(\alpha)$, $f'(\alpha)$,  \ldots, $f^{(\mu-1)}(\alpha)$. Since by assumption $f(\alpha)$, $f'(\alpha)$,  \ldots, $f^{(\mu-1)}(\alpha)$ are $\Qbar$-linearly independent, this relation has to be trivial, so that $a_i(\alpha) =0$ for any $i$. This contradicts the assumption that  $\gcd(a_0,\ldots,a_\mu)=1$. 
\end{proof}

Combining Theorem~\ref{thbeukers} and Proposition \ref{propsing} we obtain the following corollary.

\begin{coro} \label{corohom}
    Let $f$ be an E-function, and $Ly=0$ denote its minimal homogeneous linear differential equation. Let $\alpha\in\Qbar\etoile $. Then $\alpha$ is a singularity of $L$ if, and only if, $f(\alpha)$, $f'(\alpha)$,  \ldots, $f^{(\mu-1)}(\alpha)$ are linearly dependent over $\Qbar$, where $\mu$ is the order of $L$.
\end{coro}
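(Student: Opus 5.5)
The plan is to prove the two implications separately; each follows from one of the results just stated.

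\emph{If $\alpha$ is not a singularity, the values are independent.} Assume $\alpha$ is not a singularity of $L$, i.e.\ $a_\mu(\alpha)\neq 0$. Dividing $L$ by $a_\mu$ gives the companion first-order system satisfied by the vector $\tra(f,f',\ldots,f^{(\mu-1)})$. Its matrix has entries $-a_i/a_\mu$ and $1$, so $\alpha$ is not a pole of any entry.

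By minimality of $L$, the functions $f,f',\ldots,f^{(\mu-1)}$ are linearly independent over $\Qbar(x)$: a nontrivial relation would be a non-zero equation of order less than $\mu$ satisfied by $f$. Each derivative of an $E$-function is again an $E$-function. Since $\alpha\neq 0$, Theorem~\ref{thbeukers} applies with $M=\mu$ and $f_j=f^{(j-1)}$. It gives that $f(\alpha),\ldots,f^{(\mu-1)}(\alpha)$ are linearly independent over $\Qbar$, which is the contrapositive of the ``if'' direction.

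\emph{If the values are independent, $\alpha$ is not a singularity.} Conversely, assume these values are linearly independent over $\Qbar$. Here $L$ has coprime coefficients, by the paper's standing convention. Since $f$ is entire, it is a solution of $Ly=0$ defined around $\alpha$. Proposition~\ref{propsing} then gives $a_\mu(\alpha)\neq 0$, so $\alpha$ is not a singularity.

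Neither direction involves a real obstacle. The only points to check are the minimality argument that gives $\Qbar(x)$-linear independence, and that the singularities of the companion system coincide with the zeros of $a_\mu$ once the coefficients are coprime. Only the inclusion ``poles of the system $\subseteq$ zeros of $a_\mu$'' is actually used, and it is immediate.
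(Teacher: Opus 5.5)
Your proof is correct and follows the paper's argument: the corollary is obtained by combining Theorem~\ref{thbeukers}, applied to $f_j=f^{(j-1)}$ through the companion system, for one direction, with Proposition~\ref{propsing} for the converse. Your choice $M=\mu$ is the right one; the paper's preceding paragraph writes $M=\mu-1$, which is a slip.
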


We can also adapt this proof to the inhomogeneous setting.

\begin{prop} \label{propinhom}
Let $f$ be an E-function, and $L_0y=P_0$ denote its minimal non-zero inhomogeneous linear differential equation. Let $\alpha\in\Qbar\etoile $. Then $\alpha$ is a singularity of this equation $L_0y=P_0$ if, and only if, 1, $f(\alpha)$, $f'(\alpha)$,  \ldots, $f^{(\mu_0-1)}(\alpha)$ are linearly dependent over $\Qbar$, where $\mu_0$ is the order of $L_0$.
\end{prop}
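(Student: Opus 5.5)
The plan is to prove the two implications separately, mirroring Corollary~\ref{corohom}. For the implication ``not a singularity $\Rightarrow$ independence'', we convert the inhomogeneous equation into a first-order system to which Theorem~\ref{thbeukers} applies. For the converse, we run the argument of Proposition~\ref{propsing} with the extra term $P_0$. Write $L_0=\sum_{i=0}^{\mu_0} a_i(x)(\frac{d}{dx})^i$ with $\gcd(a_0,\ldots,a_{\mu_0},P_0)=1$.

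\emph{Dependence $\Rightarrow$ singularity.} Assume $a_{\mu_0}(\alpha)=0$ and evaluate $L_0f=P_0$ at $\alpha$. This gives
\[
\sum_{i=0}^{\mu_0-1}a_i(\alpha)f^{(i)}(\alpha)-P_0(\alpha)\cdot 1=0,
\]
which is a $\Qbar$-linear relation among $1,f(\alpha),\ldots,f^{(\mu_0-1)}(\alpha)$. If these numbers are linearly independent, then all $a_i(\alpha)$ and $P_0(\alpha)$ vanish, contradicting the gcd condition. Hence a singularity at $\alpha$ forces linear dependence, which is the contrapositive of what we need in this direction.

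\emph{Non-singularity $\Rightarrow$ independence.} We apply Theorem~\ref{thbeukers} to the $E$-functions $1,f,f',\ldots,f^{(\mu_0-1)}$. The vector $\tra(1,f,\ldots,f^{(\mu_0-1)})$ satisfies a first-order system whose only nontrivial row is
\[
(f^{(\mu_0-1)})'=\frac{P_0}{a_{\mu_0}}\cdot 1-\sum_{i<\mu_0}\frac{a_i}{a_{\mu_0}}f^{(i)} .
\]
All other entries of the matrix are constants, so its poles lie among the zeros of $a_{\mu_0}$. Consequently, if $\alpha$ is not a singularity of $L_0y=P_0$, it is not a singularity of this system.

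The remaining point, and the main one, is to check that $1,f,\ldots,f^{(\mu_0-1)}$ are linearly independent over $\Qbar(x)$, which is what minimality provides. Suppose $c+\sum_{i<\mu_0}b_if^{(i)}=0$ with $b_i,c\in\Qbar(x)$ not all zero. If some $b_i\neq 0$, then after clearing denominators $f$ satisfies an inhomogeneous equation of order less than $\mu_0$, contradicting minimality. If all $b_i=0$, then $c=0$, so the relation is trivial. (If $f$ is a polynomial, its minimal inhomogeneous equation has order $0$ and the family is just $\{1\}$, which is trivially independent.)

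With independence over $\Qbar(x)$ established, Theorem~\ref{thbeukers} gives that $1,f(\alpha),\ldots,f^{(\mu_0-1)}(\alpha)$ are linearly independent over $\Qbar$ for $\alpha\in\Qbar^*$, which completes the equivalence.
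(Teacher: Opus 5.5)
Your proof is correct and is essentially the paper's argument: you prove ``singularity $\Rightarrow$ dependence'' by evaluating $L_0f=P_0$ at a zero of $a_{\mu_0}$ and using $\gcd(a_0,\ldots,a_{\mu_0},P_0)=1$, and the converse by applying Theorem~\ref{thbeukers} to $1,f,\ldots,f^{(\mu_0-1)}$, whose $\Qbar(x)$-linear independence comes from minimality (you also spell out the companion system and the independence check, which the paper leaves implicit). One cosmetic slip: your first paragraph is headed ``Dependence $\Rightarrow$ singularity'' but actually proves ``singularity $\Rightarrow$ dependence'' directly, with no contrapositive involved, while your second paragraph is the contrapositive of ``dependence $\Rightarrow$ singularity''; together they still give the full equivalence.
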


\begin{proof} We write $L_0  = \sum_{i=0}^{\mu_0} a_i(x) (\frac{d}{dx})^i \in \Qbar[x, \frac{d}{dx}]$ with $a_{\mu_0}  \neq 0$, and we may assume that $\gcd(a_0,\ldots,a_{\mu_0},P_0)=1$. Then a singularity of the equation $L_0y=P_0$ is, by definition, a zero of $a_{\mu_0}$.

$\Rightarrow$ We have $\sum_{i=0}^{\mu_0-1} a_i(\alpha) f^{(i)}(\alpha) =P_0(\alpha)$ since $a_{\mu_0}(\alpha)=0$. This is a $\Qbar$-linear relation between 1 and $f(\alpha)$, $f'(\alpha)$,  \ldots, $f^{(\mu_0-1)}(\alpha)$, and it is non-trivial because $a_0$, \ldots, $a_{\mu_0}$, $P_0$ have no common zero.

$\Leftarrow$ If $\alpha$ is not a singularity, we apply Theorem~\ref{thbeukers} to the functions $1$, $f$, $f'$, \ldots, $f^{(\mu_0-1)}$: these functions are linearly independent over $\Qbar(x)$ by minimality of the inhomogeneous  differential equation $L_0y=P_0$. We deduce that the values at $\alpha$ of these functions are linearly independent over $\Qbar$.
\end{proof}

\subsection{Beukers' desingularization lemma}

We shall use the following desingularization lemma. It is due to Beukers \cite[Theorem 1.5]{beukers} for $E$-functions in the strict sense and  was extended by Lepetit \cite[\S6.5, Th\'eor\`eme 13]{lepetit} to  $E$-functions in Siegel's sense.

\begin{lem} \label{propdesing}
Let  $g_{1}$, \ldots, $g_M$ be $E$-functions, linearly independent over $\Qbar(x)$, such that the  vector $\tra (g_{1} , \ldots,  g_M)$ is solution of a first-order linear differential system with entries in $\Qbar(x)$. Then there exist $E$-functions $h_{1}$, \ldots, $h_M$ such that:
\begin{itemize}
\item The vector $\tra (h_{1} , \ldots,  h_M)$ is solution of a first-order linear differential system with coefficients in $\Qbar[x,1/x]$ (that is, without non-zero finite singularity).
\item Each function $g_\ell$ can be written as a linear combination of $h_{1} , \ldots,  h_M$ with coefficients in  $\Qbar[x]$.
\end{itemize}
\end{lem}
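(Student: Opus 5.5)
The plan follows Beukers' approach: put the system in a cyclic form, remove one finite singularity at a time using Theorem~\ref{thbeukers} together with Theorem~\ref{thAB}, and stop by a degree argument.

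\emph{Setup.} Write the system as $Y'=AY$ with $A\in M_M(\Qbar(x))$ and $Y={}\tra(g_1,\ldots,g_M)$. We look for a vector $H={}\tra(h_1,\ldots,h_M)$ of $E$-functions with $Y=PH$ for some $P\in M_M(\Qbar[x])$ having $\det P\neq 0$. Then $H$ satisfies $H'=BH$ with
\[
B=P^{-1}AP-P^{-1}P'.
\]
The entries of $H$ remain linearly independent over $\Qbar(x)$, because they span the same $\Qbar(x)$-vector space as the $g_\ell$. We proceed by induction on a measure of the non-zero finite singularities of $B$, for instance the total order of the poles of $B$ in $\Qbar^*$, and we modify $H$ one singular point at a time.

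\emph{Elementary step.} Let $\alpha\neq 0$ be a pole of $B$. Choose a polynomial matrix $Q$ with $\det Q$ non-vanishing at $\alpha$ such that $W=QH$ is a cyclic basis near $\alpha$, i.e. $W={}\tra(w,w',\ldots,w^{(M-1)})$ up to this change. Every polynomial gauge transformation keeps the entries $E$-functions, since $E$-functions form a ring stable under derivation. Since $\alpha$ is a singularity, the values at $\alpha$ satisfy a non-trivial $\Qbar$-linear relation. In the cyclic form this comes from Proposition~\ref{propsing}, or more generally from Corollary~\ref{corohom} applied to a cyclic vector. So there is a non-zero $c\in\Qbar^M$ with $c\cdot H(\alpha)=0$.

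By Theorem~\ref{thAB}, applied after the shift $x\mapsto \alpha x$ to send $\alpha$ to $1$, the function $(c\cdot H)(x)/(x-\alpha)$ is again an $E$-function. Complete $c$ to an invertible constant matrix $C$ whose first row is $c$. Replace $H$ by
\[
\tilde H=\operatorname{diag}\bigl((x-\alpha)^{-1},1,\ldots,1\bigr)\,C H .
\]
This is still a vector of $E$-functions, and $H=C^{-1}\operatorname{diag}(x-\alpha,1,\ldots,1)\tilde H$ is a polynomial combination of $\tilde H$. Hence the second bullet of the lemma is preserved along the induction.

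\emph{Termination.} This is the main obstacle: we must show that the process stops after finitely many steps. Such a modification does not obviously lower the pole order of the new matrix $\tilde B$ at $\alpha$. What I would try first is Beukers' measure, the order at $\alpha$ of $\det$ of the Wronskian-type matrix. Equivalently, take a fundamental matrix $\Phi$ whose first column is $H$, with the remaining columns solutions at $\alpha$, and consider
\[
\delta(H)=\operatorname{ord}_{x=\alpha}\det\bigl(H,H',\ldots,H^{(M-1)}\bigr)\ge 0.
\]
Each step divides one combination by $(x-\alpha)$, so $\delta$ drops by at least one and the process must terminate.

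When no such relation $c$ exists, Theorem~\ref{thbeukers} together with the converse direction (as in Proposition~\ref{propsing}) should give that $\alpha$ is at most an apparent singularity. It can then be removed by a further polynomial gauge transformation, using local holomorphic fundamental solutions and a Birkhoff-type factorization $\Phi=P\cdot(\text{invertible at }\alpha)$. Doing this for each of the finitely many non-zero poles yields a system with coefficients in $\Qbar[x,1/x]$. Making this last step rigorous, namely the matching of the non-vanishing $\delta$ with the absence of poles, is the delicate point. There I would rely on the fact that $\delta$ controls the pole order of $B$ through $\operatorname{tr}B=(\det\Phi)'/\det\Phi$ together with the local exponents.
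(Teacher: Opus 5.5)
The paper does not prove this lemma itself. It quotes Beukers' Theorem 1.5 and Lepetit's extension to Siegel's sense, so you are reconstructing Beukers' argument. Your skeleton is sound: a pole of $B$ at $\alpha\neq0$ forces a relation $c\cdot H(\alpha)=0$, Theorem~\ref{thAB} lets you replace $c\cdot H$ by $c\cdot H/(x-\alpha)$, and one iterates. But there is a genuine gap in the termination step, which is the whole content of the lemma. The two quantities you call equivalent each have only one of the two properties you need.

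\emph{The Wronskian does not drop.} Your $\delta$ is the Wronskian of $h_1,\ldots,h_M$. It is $\geq0$, but it need not decrease. Take $H=\tra((x-\alpha)e^x,e^{2x})$: $B$ has a pole at $\alpha$, and your step yields $\tilde H=\tra(e^x,e^{2x})$. Yet the Wronskian of $H$ is $(x-\alpha-1)e^{3x}$, so $\delta(H)=0$ already.

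\emph{The fundamental determinant drops but is not bounded below.} The quantity that does drop by exactly one is $\operatorname{ord}_{\alpha}\det\Phi$ (equal to $\operatorname{res}_{\alpha}\operatorname{tr}B$), since $\det\tilde\Phi=\det C\,(x-\alpha)^{-1}\det\Phi$. However, it is bounded below by $0$ only if \emph{every} solution of $H'=BH$ is holomorphic at $\alpha$. Knowing that the single solution $H$ is entire gives no such bound. This is the missing idea.

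\emph{How to supply it.} You need Andr\'e's theorem that each $h_i$ is annihilated by an $E$-operator $\mathcal L_i$, which is regular at $\alpha\neq0$, together with a transfer to the whole system:
\begin{itemize}
\item Take a cyclic vector $w=r\cdot H$ with minimal operator $L_w$, and write $h_i=P_i(w)$ with $P_i\in\Qbar(x)[\frac{d}{dx}]$.
\item Any solution $Y$ equals $\tra(P_1y,\ldots,P_My)$ with $y=r\cdot Y$ and $L_wy=0$.
\item $\mathcal L_iP_i$ kills $w$, so $L_w$ right-divides $\mathcal L_iP_i$. Hence $\mathcal L_i(Y_i)=0$, and $Y_i$ is holomorphic at $\alpha$.
\end{itemize}
With this, at most $\operatorname{ord}_{\alpha}\det\Phi$ steps are possible at $\alpha$, and the new vector again satisfies the hypotheses. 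The gauge matrix $\operatorname{diag}((x-\alpha)^{-1},1,\ldots,1)C$ is holomorphic and invertible away from $\alpha$, so the other points are unaffected.

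Two smaller points. First, a polynomial $Q$ invertible at $\alpha$ with $QH$ a cyclic basis need not exist. For $H=\tra((x-\alpha)e^x,(x-\alpha)^2e^{2x})$, where $B=(1+(x-\alpha)^{-1})\operatorname{diag}(1,2)$, the second row of $Q$ would have to be $q_1'+q_1B$. This has a pole unless $q_1(\alpha)=0$, which kills $\det Q(\alpha)$. You do not need it: for a row $i$ in which $B$ has a pole at $\alpha$, let $k\geq1$ be the largest pole order among $b_{i1},\ldots,b_{iM}$. Multiply $h_i'=\sum_j b_{ij}h_j$ by $(x-\alpha)^k$ and evaluate at $\alpha$; this gives a non-trivial relation. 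It is just the argument of Proposition~\ref{propsing}, applied directly to the system. Second, the same argument shows that if no relation exists at $\alpha$, then $B$ is already holomorphic there. So your final paragraph (apparent singularities, Birkhoff factorization) is superfluous and is not where the difficulty lies. Theorem~\ref{thbeukers} is not needed anywhere in this lemma.
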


\subsection{Completion of the proof of Theorem~\ref{thinterp}}\label{subsec33}

To begin with, we notice that if we prove Theorem~\ref{thinterp} with an additional point  $\alpha_{0}\in\Qbar\etoile$, then we can eventually forget about this point and the conclusion of Theorem~\ref{thinterp} will hold with the points 
$\alpha_{n}$, $1 \leq n \leq N$. To this additional 
point (distinct from $\alpha_1$, \ldots, $\alpha_N$) we attach the values $\xi_{0,t}=e^{t+1}$ for 
$0\leq t \leq T-1$. 

\bigskip

For each $n$ and each $t$, we denote by $f_{n,t}$ an $E$-function  such that $f_{n,t}(\alpha_t) =\xi_{n,t} $; it exists since $h(ax)$ is an $E$-function whenever $h$ is an $E$-function and $a\in\Qbar$.

We denote by $A$ the $\Qbar[x]$-module generated by the constant function 1 and the $f_{n,t}$, {\em i.e.} the set of elements of the form 
$$
P + \sum_{n=0}^N\sum_{t=0}^{T-1} P_{n,t} f_{n,t},
$$
where $P$ and the $P_{n,t}$ are polynomials with algebraic coefficients. Then $A$ is a finitely generated torsion-free $\Qbar[x]$-module, and $\Qbar[x]$ is a principal ring, so that $A$ is free: there exist $\Qbar[x]$-linearly independent elements $g_1,\ldots,g_m\in A$ such that each $f_{n,t}$ can be written as 
 $$
 f_{n,t}(x)  = 
\sum_{\ell =1}^m Q_{n,t,\ell}(x) g_\ell(x)$$
with polynomials $Q_{n,t,\ell}\in \Qbar[x]$, and the function 1 can be written in this way too. Each $g_\ell$ is an $E$-function; in particular it is solution of a homogeneous linear differential equation of order $\mu_\ell$, say. Denote by $B$ the $\Qbar(x)$-vector space generated by the functions $g_\ell^{(j)}$ with $1\leq \ell \leq m$ and $0\leq j \leq \mu_\ell-1$; then $B$ is invariant under derivation. Since  $g_1,\ldots,g_m$ are $\Qbar(x)$-linearly independent elements, there exist $M\geq m$ and $E$-functions $g_{m+1}$, \ldots, $g_M$ such that $(g_1,\ldots,g_M)$ is a basis of $B$. In particular, 
\begin{itemize}
\item The $E$-functions $g_{1}$, \ldots, $g_M$ are linearly independent over $\Qbar(x)$.
\item The vector $\tra (g_{1} , \ldots,  g_M)$ is solution of a first-order linear differential system with entries in $\Qbar(x)$.
\end{itemize}

\bigskip

Applying Lemma \ref{propdesing} to $g_{1}$, \ldots, $g_M$,  we obtain   $E$-functions $h_{1}$, \ldots, $h_M$  such that the vector $\tra (h_{1} , \ldots,  h_M)$ is solution of a first-order linear differential system with coefficients in $\Qbar[x,1/x]$.  They are  $\Qbar(x)$-linearly independent since the vector space they span over $\Qbar(x)$ has dimension $M$ (it contains the  linearly independent  $E$-functions $g_{1}$, \ldots, $g_M$). Moreover each $f_{n,t}$ (and also the constant function 1) is a linear combination of $h_{1}$, \ldots, $h_M$   with coefficients in $\Qbar[x]$. It is very important here to have polynomial coefficients (and not only coefficients in $\Qbar(x)$) because it enables us to evaluate at $x=\alpha_n$ and obtain for any $0\leq n \leq N$ and  any $0\leq t \leq T-1$:
$$
\xi_{n,t}\in V_n, \mbox{ where } V_n := \Span_{\Qbar}( h_1(\alpha_n),\ldots,h_M(\alpha_n)).
$$
We have also $1\in V_n$.

\bigskip

Let $0\leq n \leq N$. Since $h_{1}$, \ldots, $h_M$   are   $\Qbar(x)$-linearly independent  $E$-functions that satisfy a first-order linear differential system of which $\alpha_n$ is not a singularity, the Andr\'e-Beukers theorem (Theorem~\ref{thbeukers} above) shows that $h_1(\alpha_n),\ldots,h_M(\alpha_n)$ are linearly independent over $\Qbar$. In other words, $\dim_{\Qbar} V_n = M$.

We recall from the beginning of the proof that  $\xi_{0,i} = e^{i+1}$ for any $0\leq i \leq T-1$. Therefore $1$,  $\xi_{0,0}$,  $\xi_{0,1}$, \ldots,  $\xi_{0,T-1}$   are $\Qbar$-linearly independent elements of $V_0$, so that $T+1\leq \dim_{\Qbar} V_0=M$.
We  claim that for any $0\leq n \leq N$  it is possible to choose values $\xi_{n,t}\in V_n$, for $T\leq t \leq M-1$, with the following properties:
\begin{itemize}
    \item if $\xi_{n,0}$, \ldots,   $\xi_{n,T-1}$ are linearly independent over $\Qbar$, then so are $\xi_{n,0}$, \ldots,   $\xi_{n,M-1}$ (and then this is a basis of $V_n$); 
    \item if 1, $\xi_{n,0}$, \ldots,   $\xi_{n,T-1}$ are linearly independent over $\Qbar$, then so are 1, $\xi_{n,0}$, \ldots,   $\xi_{n,M-2}$ (and then this is a basis of $V_n$ too). 
\end{itemize}

If 1, $\xi_{n,0}$, \ldots,   $\xi_{n,T-1}$ are linearly independent over $\Qbar$, we use the Steinitz exchange lemma  to construct $\xi_{n,t}\in V_n$, for $T\leq t \leq M-2$, in such a way that  1, $\xi_{n,0}$, \ldots,   $\xi_{n,M-2}$ is a basis of $V_n$; then we let  $\xi_{n,M-1}=1$.

Otherwise, if $\xi_{n,0}$, \ldots,   $\xi_{n,T-1}$ are linearly independent over $\Qbar$ and $1$ belongs to the subspace spanned by these numbers, we  construct $\xi_{n,t}\in V_n$, for $T\leq t \leq M-1$, in such a way that    $\xi_{n,0}$, \ldots,   $\xi_{n,M-1}$ is a basis of $V_n$.

At last, if  $\xi_{n,0}, \ldots,\xi_{n,T-1}$  are linearly dependent, we simply choose $\xi_{n,t}\in V_n$ arbitrarily for $T\leq t \leq M-1$. This concludes the proof of our claim.

\bigskip

Now we shall construct our $E$-function $f$ as 
\begin{equation} \label{eqpreuveinterpun}
 f(x)  = \sum_{i=1}^M S_i (x) h_i(x) \mbox{ with } S_1,\ldots,S_M\in\Qbar[x].
\end{equation}
To be able to choose the polynomials $S_i$ in a suitable way, we compute the $t$-th derivative  of $f$ for any $t\geq 0$:
\begin{equation} \label{eqpreuveinterpde}
 f^{(t)}(x)  = \sum_{i=1}^M S_i^{(t)}(x) h_i(x)  +  \sum_{i=1}^M \sum_{k=0}^{t-1}\binom  t k S_i^{(k)}(x)h_i^{(t-k)}(x).
\end{equation}

\bigskip

We shall construct now algebraic numbers $\xi_{n,t,i}$ such that 
 if  $S_i^{(t)}(\alpha_n)=\xi_{n,t,i}$ 
 for any $0\leq n \leq N$, $1\leq i \leq M$ and  $0\leq t \leq M-1$, then  $f^{(t)}(\alpha_n) = \xi_{n,t}$ for any $n$, $t$. 
 
For $t=0$, since $\xi_{n,0}\in V_n$ for any $n$, we may write $\xi_{n,0} = \sum_{i=1}^M \xi_{n,0,i} h_i(\alpha_n)$   for some $\xi_{n,0,i} \in \Qbar$. Then the desired 
  property $f(\alpha_n) = \xi_{n,0}$ follows at once from Eq.~\eqref{eqpreuveinterpun}  provided that $S_i(\alpha_n)=\xi_{n,0,i}$.

If the values   $S_i^{(t')}(\alpha_n)\in\Qbar$ have been chosen already for any $0\leq t' \leq t-1$, with $t\geq 1$, we write for any $n$:
\begin{equation} \label{eqpreuveinterptr}
\xi_{n,t} -  \sum_{i=1}^M \sum_{k=0}^{t-1}\binom  t k S_i^{(k)}(\alpha_n)h_i^{(t-k)}(\alpha_n) = \sum_{i=1}^M \xi_{n,t,i} h_i(\alpha_n) \mbox{  for some } \xi_{n,t,i} \in \Qbar.
\end{equation}
Indeed  $h_i^{(t-k)}$ is a linear combination of $h_1$, \ldots, $h_M$ with coefficients in $\Qbar[x,1/x]$ due to the differential system satisfied by these functions, so that $ h_i^{(t-k)}(\alpha_n)\in V_n$. Moreover $\binom  t k S_i^{(k)}(\alpha_n) \in\Qbar$  and $V_n$ is a $\Qbar$-vector space that contains $\xi_{n,t}$, so the left hand side of Eq.~\eqref{eqpreuveinterptr} belongs to $V_n$ and the algebraic coefficients  $\xi_{n,t,i}$  exist. Using Eq.~\eqref{eqpreuveinterpde} we obtain immediately that $S_i^{(t)}(\alpha_n) = \xi_{n,t,i} $ for all $i$ implies $f^{(t)}(\alpha_n) = \xi_{n,t}$.

 This concludes the proof of our claim; of course it is possible to find polynomials $S_1$, \ldots, $S_M$ such that $S_i^{(t)}(\alpha_n)$ is equal to that algebraic number $\xi_{n,t,i}$ for any $0\leq n \leq N$, $1\leq i \leq M$ and  $0\leq t \leq M-1$.
Then the function $f$ defined by Eq.~\eqref{eqpreuveinterpun} satisfies $f^{(t)}(\alpha_n) = \xi_{n,t}$ for any $n$, $t$. This proves the first assertion of Theorem~\ref{thinterp}.

\bigskip

Now recall that the values $f^{(t)}(\alpha_0) = \xi_{0,t}$, $0\leq t \leq M-1$, are linearly independent over $\Qbar$ since $e$, $e^2$, \ldots, $e^{T}$ are. Therefore the functions $f$, $f'$, \ldots, $f^{(M-1)}$ are linearly independent over $\Qbar [x]$ (that is, over $\Qbar(x)$). Denoting by $\mu$ the order of the minimal homogeneous differential equation satisfied by $f$, this implies $\mu \geq M$. On the other hand, the $\Qbar(x)$-vector space spanned by $h_1$, \ldots, $h_M$ has dimension $M$, is stable under taking  derivatives and contains $f$ so that $\mu \leq M$. Finally we have $\mu=M$, and we have noticed that $M\geq T+1$ so that $\mu\geq T+1$. 

Corollary \ref{corohom} then shows that for any $n$ we have the following equivalence: $\alpha_n$ is a singularity of the minimal homogeneous differential equation satisfied by $f$ if, and only if, the values $f^{(t)}(\alpha_n)=\xi_{n,t}$ (for $0\leq t \leq M-1$) are linearly dependent over $\Qbar$. By construction of the $\xi_{n,t}$ for $T\leq t \leq M-1$, this is equivalent to the linear dependence of $\xi_{n,t}$  for $0\leq t \leq T-1$.  Therefore  the second assertion of Theorem~\ref{thinterp} holds.

\bigskip

To conclude, let us consider the minimal inhomogeneous differential equation $L_0y=P_0$ satisfied by $f$, and denote by $\mu_0$ its order. The family $(f,f',\ldots, f^{(M-1)})$ is a basis of the $\Qbar(x)$-vector space spanned by $h_1$, \ldots, $h_M$, and this vector space contains the constant function $1$. This provides a non-trivial linear relation between $1$, $f$, $f'$, \ldots, $f^{(M-1)}$, that is an inhomogeneous differential equation of order at most $M-1$ satisfied by $f$. Denoting by $\mu_0$ the order of the minimal inhomogeneous differential equation $L_0y=P_0$ satisfied by $f$, we have $\mu_0\leq M-1$. Actually equality holds, because $f$ then satisfies an  homogeneous differential equation of order $\mu_0+1$ so that $\mu_0+1\geq M$.

Proposition \ref{propinhom} shows that for any $n$, $\alpha_n$ is a singularity of this equation $L_0y=P_0$ if, and only if, the values 1 and $f^{(t)}(\alpha_n)=\xi_{n,t}$ (for $0\leq t \leq M-2$) are linearly dependent over $\Qbar$. By construction of the $\xi_{n,t}$ for $T\leq t \leq M-1$, this is equivalent to the linear dependence of 1 and the $\xi_{n,t}$ for $0\leq t \leq T-1$. 

This concludes the proof of Theorem~\ref{thinterp}.

\noindent St\'ephane Fischler, Universit\'e Paris-Saclay, CNRS, Laboratoire de math\'ematiques d'Orsay, 91405 Orsay, France.

\medskip

\noindent Tanguy Rivoal,  Institut Fourier, Universit\'e Grenoble Alpes, CNRS,CS 40700, 38058 Grenoble cedex 9, France.

\bigskip

\noindent Keywords: $E$-functions,  Singularities of differential equations, Andr\'e-Beukers lifting theorem.

\bigskip

\noindent MSC 2020: 11J91 (Primary), 34M03 (Secondary)

\end{document}